\theoremstyle{plain}
\newtheorem{theorem}{Theorem}[section]
\newtheorem*{theorem*}{Theorem}
\newtheorem{proposition}[theorem]{Proposition}
\newtheorem{corollary}[theorem]{Corollary}
\newtheorem{lemma}[theorem]{Lemma}
\theoremstyle{definition}
\newtheorem{definition}[theorem]{Definition}
\newtheorem{remark}[theorem]{Remark}
\newtheorem{example}[theorem]{Example}
\newcommand{\enm}[1]{\ensuremath{#1}}          %
\newcommand{\op}[1]{\operatorname{#1}}
\newcommand{\cal}[1]{\mathcal{#1}}
\newcommand{\CC}{\enm{\mathbb{C}}}
\newcommand{\PP}{\enm{\mathbb{P}}}
\newcommand{\Oo}{\enm{\cal{O}}}
\renewcommand{\phi}{\varphi}
\renewcommand{\theta}{\vartheta}
\renewcommand{\epsilon}{\varepsilon}
\newcommand{\Hom}{\op{Hom}}
\newcommand{\Ext}{\op{Ext}}
\newcommand{\Tor}{\op{Tor}}
\newcommand{\old}[1]{}
\begin{document}

\title[On Triple Veronese Embeddings of $\PP_n$ in the Grassmannians]
{On Triple Veronese Embeddings of $\PP_n$ in the Grassmannians}
\author{Sukmoon Huh}
\address{Korea Institute for Advanced Study \\
Hoegiro 87, Dongdaemun-gu \\
Seoul 130-722, Korea}
\email{sukmoonh@kias.re.kr}

\keywords{Veronese Embedding, Vector Bundle, Projective Space}


\subjclass[msc2000]{Primary: {14D20}; Secondary: {14E25}}

\begin{abstract}
We classify all the embeddings of $\mathbb{P}_n$ in a Grassmannian
$Gr(1,N)$ such that the composition with the Pl\"{u}cker embedding is
given by a linear system of cubics on $\mathbb{P}_n$. As a direct corollary, we prove that every
vector bundle giving such an embedding, splits if $n\geq 3$.
\end{abstract}
\maketitle

\section{Introduction}

Giving a vector bundle $E$ of rank 2 on $\PP_n$ together with an
epimorphism $\Oo_{\PP_n}^{N+1} \rightarrow E$ is equivalent to
giving a morphism from $\PP_n$ to a Grassmannian $Gr(1,N)$. When
$\wedge^2 E=\Oo_{\PP_n}(2)$, Jos\'{e} Carlos Sierra and Luca Ugaglia
\cite{SU} classified all the embeddings of $\PP_n$ in a Grassmannian
$Gr(1,N)$ such that the composition with the Pl\"{u}cker embedding is
given by a linear system of quadrics on $\PP_n$.

In this article, we give similar answers on the vector bundles of
rank 2 giving rise to triple Veronese embeddings of $\PP_n$. The classification is divided into two parts. The first part is about the unstable bundles, and the other part is about the stable bundles. For the second part, we classify all the triple Veronese embeddings of $\PP_2$ and then show that there are no such embeddings in the case of $\PP_n$, $n\geq 3$.

The main statement is as follows:

\begin{theorem}
Let $X\subset Gr(1,N)$ be a triple Veronese embedding of $\PP_n$
given by a vector bundle $E$ of rank 2 on $\PP_n$. Then one of the
following holds:
\begin{enumerate}
\item $E\simeq \Oo_{\PP_n}(a)\oplus \Oo_{\PP_n}(3-a)$, $a=0,1$;
\item $n=2$ and $E$ admits a resolution,
$$0\rightarrow \Oo_{\PP_2}(2) \rightarrow E \rightarrow I_p(1)
\rightarrow 0,$$
where $I_p$ is the ideal sheaf of a point $p\in \PP_2$;
\item $n=2$ and $E\simeq \Omega_{\PP_2}(3)$;
\item $n=2$ and $E$ is a stable vector bundle of rank 2 on $\PP_2$
admitting one of the following resolution;
\begin{enumerate}
\item $0\rightarrow \Oo_{\PP_2}(-1)^{\oplus 3}\rightarrow \Oo_{\PP_2}^{\oplus 5}
\rightarrow E \rightarrow 0;$
\item $0\rightarrow \Omega_{\PP_2}(1)\oplus\Oo_{\PP_2}(-2) \rightarrow
\Oo_{\PP_2}^{\oplus 5} \rightarrow E \rightarrow 0;$
\item $0\rightarrow \Omega_{\PP_2}(1)\oplus \Oo_{\PP_2}(-1)^{\oplus 2}
\rightarrow \Oo_{\PP_2}^{\oplus 6} \rightarrow E \rightarrow 0;$
\item $0\rightarrow \Omega_{\PP_2}(1)^{\oplus 2}\oplus \Oo_{\PP_2}(-1)  \rightarrow
\Oo_{\PP_2}^{\oplus 7} \rightarrow E \rightarrow 0.$
\end{enumerate}
\end{enumerate}
\end{theorem}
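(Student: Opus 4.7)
The assumption that the Pl\"{u}cker composition is given by cubics translates to $\wedge^2E\simeq\Oo_{\PP_n}(3)$, so $c_1(E)=3$, and the epimorphism $\Oo_{\PP_n}^{N+1}\twoheadrightarrow E$ realizes $E$ as globally generated. Since $c_1(E)=3$ is odd, there is no strictly semistable possibility, so $E$ is either unstable or stable; the plan is to treat the two alternatives in turn.

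\emph{Unstable case.} Let $a\geq 2$ be the largest integer admitting a saturated inclusion $\Oo_{\PP_n}(a)\hookrightarrow E$, producing
\[
0\longrightarrow \Oo_{\PP_n}(a)\longrightarrow E\longrightarrow I_Z(3-a)\longrightarrow 0
\]
with $Z\subset\PP_n$ of codimension $\geq 2$ (possibly empty). Global generation forbids $a\geq 4$ and forces the quotient to be globally generated. For $a=3$ this means $Z=\emptyset$, and since $\Ext^1(\Oo,\Oo(3))=0$ the sequence splits into $\Oo\oplus\Oo(3)$, giving case~(1) with $a=0$. For $a=2$, $I_Z(1)$ globally generated forces $Z$ to be empty, a single point on $\PP_2$, or a linear $\PP_{n-2}$ in $\PP_n$; the empty case splits (by $\Ext^1(\Oo(1),\Oo(2))=0$) to case~(1) with $a=1$, the single-point case on $\PP_2$ reproduces case~(2), and a Serre-construction computation shows that the linear-subspace case on $\PP_n$, $n\geq 3$, yields no locally free $E$ giving an embedding.

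\emph{Stable case.} Here $h^0(E(-2))=0$. For $n=2$, Hirzebruch--Riemann--Roch and Serre duality express each $h^i(E(k))$ purely in terms of $c_2(E)$, and global generation bounds $c_2(E)$ from above. Beilinson's theorem on $\PP_2$ then presents $E$ as the cohomology of a monad whose terms are twists of $\Oo$, $\Omega_{\PP_2}(1)$ and $\Oo(-1)$, with Betti numbers read off from the finitely many cohomology tables compatible with stability and global generation. A case-by-case analysis, cross-checked against the requirement that the induced map to $Gr(1,N)$ be a closed immersion, singles out precisely $\Omega_{\PP_2}(3)$ of case~(3) and the four resolutions of case~(4). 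For $n\geq 3$, I restrict $E$ to a general plane: by Grauert--M\"{u}lich the restriction $E|_{\PP_2}$ remains stable, globally generated, with $\wedge^2=\Oo(3)$, hence is among the bundles just listed. One then checks in each of cases~(3) and~(4) that the cohomological obstructions to extending through a chain of hyperplanes back to $\PP_n$ cannot all vanish, ruling out the stable case in higher dimension.

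The principal obstacle is the Beilinson analysis on $\PP_2$: it reduces to a finite but delicate case-by-case on the admissible tables $(h^i(E(k)))_{i,k}$, subject simultaneously to stability, global generation and the embedding property, and the verification that only four resolutions survive is the heart of the proof. Once this is in hand, the unstable analysis and the $n\geq 3$ portion of the stable case follow quickly from hyperplane-restriction and standard vanishing arguments.
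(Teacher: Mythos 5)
Your unstable analysis is essentially sound and runs parallel to the paper's: the maximal destabilizing sub-line-bundle sequence, splitting when $Z=\emptyset$, and the identification of the nonsplit case with $Z$ a linear space (the paper kills the $\PP_{n-2}\subset\PP_n$, $n\geq 3$, case by computing $c_3(E)=3\neq 0$, which is the content of your ``Serre-construction computation''). The problem is the stable case, which is where all the substance of the theorem lies, and there your proposal is a plan rather than a proof. On $\PP_2$ you never actually derive the bound on the cohomology tables: the paper gets $h^0(E)\leq 5$ by injecting $H^0(E)$ into $H^0(E|_l)$ for a line $l$, then splits into $h^0(E(-1))=0$ versus $h^0(E(-1))>0$; in the first branch the exclusion of $h^0(E)=4$ is not a formal Beilinson/cohomology-table computation at all --- one kernel candidate is $\Oo_{\PP_2}(-2)\oplus\Oo_{\PP_2}(-1)$, which produces a degree-$9$ surface in $Gr(1,3)$, and it is ruled out only by invoking Arrondo--Sols' classification of congruences of lines of degree $\leq 9$; the second branch requires the geometry of the zero scheme $Z$ (no three points collinear, $3\leq |Z|\leq 4$) and the known minimal resolutions of stable bundles in $M(-1,c_2)$, $c_2\leq 4$, to produce exactly the resolutions (3), (4a)--(4d) and to verify global generation and the embedding property. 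You acknowledge that this case-by-case is ``the heart of the proof'' and defer it, so the central claim is unproved as written.

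The $n\geq 3$ part has a conceptual misdirection in addition to being unexecuted. First, Grauert--M\"ulich concerns generic splitting on lines, not stability of the restriction to a plane; you would need a Barth/Mehta--Ramanathan type restriction theorem. More importantly, there is no ``extension through a chain of hyperplanes'' to obstruct: the bundle on $\PP_n$ is given, and one must derive a contradiction directly. Restriction to a plane only yields $3\leq c_2(E)\leq 6$, and on $\PP_3$ the Schwarzenberger parity condition $c_1c_2\equiv 0\pmod 2$ eliminates only $c_2=3,5$; the surviving values $c_2=4,6$ are cohomologically consistent, and the paper excludes them by analyzing the curve $Z$ attached to a section (via the local fundamental isomorphism $\omega_Z=\Oo_Z(-1)$ or $\Oo_Z(-3)$, genus and degree counts, the classification of low-degree space curves, and the Hartshorne--Sols description of nonreduced curves in double planes), ultimately exhibiting a line $l$ with $E|_l\simeq\Oo_l(4)\oplus\Oo_l(-1)$, which is incompatible with $E|_l\simeq\Oo_l(a)\oplus\Oo_l(3-a)$, $a=0,1$, forced by the embedding. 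None of this geometric input appears in your sketch, and purely ``cohomological obstructions'' will not reproduce it; so the $n\geq 3$ exclusion, like the $\PP_2$ classification, remains a genuine gap.
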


As a direct corollary, we can derive a statement that every vector bundle of
rank 2 on $\PP_n$ giving a triple Veronese embedding, splits if
$n\geq 3$.

I am grateful to Jos\'{e} Carlos Sierra for introducing this problem. I thank Prof. Emilia Mezzetti for pointing out the mistake in the list of the theorem (1.1). And I also thank the anonymous referee for pointing out the mistakes in the original version of this paper and a number of corrections and suggestions.

\section{Preliminaries and Examples}
Let $Gr(1,N)$ be a Grassmannian variety which parametrizes all the
projective lines in $\PP_N$.

\begin{definition}
An embedding $\phi : \PP_n \rightarrow Gr(1,N)$ is called a $d$-Veronese
embedding of $\PP_n$ into $Gr(1,N)$ if the composition of $\phi$
with the Pl\"{u}cker embedding of $Gr(1,N)$ is given by a linear
system of degree $d$ on $\PP_n$. When $d=3$, we call it a triple
Veronese embedding.
\end{definition}
Let $n=1$ and $E$ be a vector bundle of rank 2 on $\PP_1$ with
$c_1=d\geq 1$. Due to Grothendieck, $E$ must be a direct sum of two
lines bundles, i.e. $\Oo_{\PP_1} (a) \oplus \Oo_{\PP_1}( d-a )$, $0\leq
a \leq [\frac {d}2]$. $E$ gives a $d$-Veronese embedding of $\PP_1$
into $Gr(1,d+1)$ as the family of lines joining the corresponding
points of two normal rational curves of degree $a$ and $d-a$.

Let $d=1$ and $E$ be a vector bundle of rank 2 on $\PP_n$ with
$c_1=1$ giving a 1-Veronese embedding of $\PP_n$ into a
Grassmannian. For a line $l\subset \PP_n$, we have
$$E|_l\simeq \Oo_l\oplus \Oo_l(1),$$
since $E|_l$ is also globally generated. In particular, $E$ is
uniform and, from \cite{OSS} and \cite{VdV}, $E$ is isomorphic to
$\Oo_{\PP_n}\oplus \Oo_{\PP_n}(1)$, or $\Omega_{\PP_2}(2)$ when
$n=2$. In the first case, $E$ embeds $\PP_n$ into $Gr(1,n+1)$ as the
set of lines passing through a point in $\PP_{n+1}$. In the case
of $E\simeq \Omega_{\PP_2}(2)$, $E$ gives a 1-Veronese
embedding of $\PP_2$ into $Gr(1,2)\simeq \PP_2^*$.

\begin{example}
A vector bundle of rank 2, $E\simeq \Oo_{\PP_n}(a) \oplus
\Oo_{\PP_n}(3-a)$, $a=0,1$, gives a triple Veronese embedding of
$\PP_n$ into $Gr(1,N)$, $N={n+a \choose n}+{n+3-a \choose n}$ as the
family of lines joining the corrresponding points on the two copies
of $\PP_n$, $v_a(\PP_n)$ and $v_{3-a}(\PP_n)$. As a convention, we
assume that $v_0(\PP_n)$ is a point.
\end{example}
\begin{example}
A vector bundle $E$ of rank 2 admitting the following resolution,
$$0\rightarrow \Oo_{\PP_2}(-1)^{\oplus 3}\rightarrow \Oo_{\PP_2}^{\oplus 5}
\rightarrow E \rightarrow 0,$$ gives a triple Veronese embedding of
$\PP_2$ into $Gr(1,4)$. In fact, there exist 3 lines $L_i$, $1\leq i
\leq 3$, in $\PP_4$ with the isomorphisms $\phi_i: \PP_2 \rightarrow
|L_i|$ determined by the resolution, where $|L_i|$ is the projective
space of hyperplanes in $\PP_4$ containing $L_i$. To each point
$x\in \PP_2$, we can associate a line in $\PP_4$ which is the
intersection of all $\phi_i(x)$.  This defines an embedding of
$\PP_2$ into $Gr(1,4)$ (see \cite{DK}).
\end{example}

From now on, we fix $d=3$ and $n\geq 2$.

\section{Classification}

 Let $E$ be a vector bundle of rank 2 on $\PP_n$ with $c_1(E)=3$
and $E$ is globally generated, giving a triple Veronese embedding
$$\phi_{E} : \PP_n \rightarrow Gr (1,N).$$
For a line $l\subset \PP_n$, we have
$$E|_l \simeq \Oo_l(a) \oplus \Oo_l(3-a)$$ with $a=0,1$, since
$E|_l$ is also globally generated. In particular, $h^0(E|_l)=5$.

\begin{proposition}
Let $E$ be an unstable vector bundle of rank 2 on $\PP_n$ with
$c_1(E)=3$, and gives a triple Veronese embedding of $\PP_n$ into
$Gr(1,N)$. Then one of the following holds:
\begin{enumerate}
\item $\Oo_{\PP_n}\oplus \Oo_{\PP_n}(3),$
\item $\Oo_{\PP_n}(1)\oplus \Oo_{\PP_n}(2),$
\item $n=2$ and $E$ is the unique non-trivial extension of the following resolution,
$$0\rightarrow \Oo_{\PP_2}(2) \rightarrow E \rightarrow I_p(1),$$
where $I_p$ is the ideal sheaf of a point $p\in \PP_2$.
\end{enumerate}
\end{proposition}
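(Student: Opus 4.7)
The plan is to exploit instability to produce a destabilizing short exact sequence, dispose of the split cases, and then rule out most non-split possibilities using the embedding hypothesis.

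First, I would find the maximal $k \geq 2$ with $h^0(E(-k)) \neq 0$ and choose a nonzero section $s \in H^0(E(-k))$. Writing the zero locus of $s$ as a divisorial part plus a codimension $\geq 2$ part, maximality of $k$ together with $\Pic \PP_n = \ZZ$ force the divisorial part to vanish, yielding
$$0 \to \Oo_{\PP_n}(k) \to E \to I_Z(3-k) \to 0$$
with $\codim Z \geq 2$. Restricting to a general line $l$, where $E|_l \cong \Oo_l(a) \oplus \Oo_l(3-a)$ with $a \in \{0,1\}$, the inclusion $\Oo(k) \hookrightarrow E$ forces $k \leq 3$, so $k \in \{2,3\}$.

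If $k = 3$, local freeness of $E$ forces $Z = \emptyset$, and $H^1(\Oo_{\PP_n}(3)) = 0$ splits the extension to give $E \cong \Oo_{\PP_n} \oplus \Oo_{\PP_n}(3)$. If $k = 2$ and $Z = \emptyset$, the vanishing $H^1(\Oo_{\PP_n}(1)) = 0$ similarly yields $E \cong \Oo_{\PP_n}(1) \oplus \Oo_{\PP_n}(2)$. The substantive case is $k = 2$ with $Z \neq \emptyset$. Here the first key observation is that $E$ is globally generated if and only if $I_Z(1)$ is: inspecting the Tor long exact sequence at $z \in Z$ shows that the fiber map $\Oo_{\PP_n}(2) \otimes k(z) \to E \otimes k(z)$ is zero (because $s$ vanishes on $Z$), so $E \otimes k(z) \xrightarrow{\sim} I_Z(1) \otimes k(z)$, and only sections in the image of $H^0(I_Z(1))$ can generate the fiber of $E$ over points of $Z$. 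Global generation of $I_Z(1)$ means $Z$ is cut out scheme-theoretically by its linear equations, so $Z$ is a linear subspace; local freeness of $E$ (Serre construction) further forces $Z$ to be pure of codimension $2$, hence a linear $\PP^{n-2}$.

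For $n \geq 3$ this linear $\PP^{n-2}$ has positive dimension, and I would derive a contradiction with the embedding hypothesis as follows. For any $z \in Z$ the sections of $E$ vanishing at $z$ contain the image of $H^0(\Oo_{\PP_n}(2))$ in $H^0(E)$ (by the above fiber vanishing), while the evaluation $H^0(I_Z(1)) \to I_Z(1) \otimes k(z) \cong k^2$ is an isomorphism of $2$-dimensional spaces (both given by the two linear forms defining $Z$), so no further lifts contribute. By dimension counting, the kernel of $H^0(E) \to E \otimes k(z)$ equals the image of $H^0(\Oo_{\PP_n}(2))$, which is independent of $z \in Z$. Hence $\phi_E(z) \in Gr(1,N)$ is the same point for every $z \in Z$, contradicting injectivity of the embedding $\phi_E$. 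Therefore $n = 2$. For $n = 2$, $Z$ is a linear subspace of codimension $2$ in $\PP^2$, i.e.\ a single reduced point $p$, so $Z = \{p\}$. A short computation (e.g.\ $\Ext^1(I_p(1), \Oo_{\PP_2}(2)) \cong \Ext^2(\Oo_p, \Oo_{\PP_2}(1)) \cong k$ by Serre duality) shows the non-trivial extension is unique up to isomorphism; one checks that this extension is locally free and globally generated, realizing case (3). I expect the most delicate step to be the contraction argument for $n \geq 3$: one must carefully verify that no lift of a section $\beta \in H^0(I_Z(1))$ to $H^0(E)$ can acquire a nonzero value at $z \in Z$ beyond what is prescribed by the leading coefficient $\beta(z) \in I_Z(1) \otimes k(z)$, so that the kernel of evaluation is genuinely the same at every point of the positive-dimensional $Z$.
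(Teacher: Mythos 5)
Your proposal follows the same skeleton as the paper's proof: a destabilizing sequence $0\rightarrow \Oo_{\PP_n}(k)\rightarrow E\rightarrow I_Z(3-k)\rightarrow 0$ with $k\geq 2$, the split cases when $Z=\emptyset$, and then an analysis of nonempty $Z$ with $k=2$. Two of your key steps, however, are genuinely different from the paper's, and both are correct. To prove $Z$ is linear you use that $I_Z(1)$, being a quotient of the globally generated $E$, is itself globally generated, so $I_Z$ is generated by the linear forms vanishing on $Z$ and $Z$ is a linear $\PP_{n-2}$; the paper instead restricts the sequence to a line meeting $Z$ in a zero-dimensional scheme of length at least $2$ and derives a subsheaf $\Oo_l(k'')$, $k''\geq 4$, of $E|_l\simeq \Oo_l(a)\oplus\Oo_l(3-a)$, which is impossible. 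To exclude $n\geq 3$ you note that for $z\in Z$ the fiber map $\Oo_{\PP_n}(2)\otimes k(z)\rightarrow E\otimes k(z)$ vanishes, so evaluation $H^0(E)\rightarrow E\otimes k(z)$ factors through the isomorphism $E\otimes k(z)\simeq I_Z(1)\otimes k(z)$ and the injective map $H^0(I_Z(1))\rightarrow I_Z(1)\otimes k(z)$; hence the kernel of evaluation is the image of $H^0(\Oo_{\PP_n}(2))$ for every $z\in Z$, so $\phi_E$ contracts the positive-dimensional $Z$ to a point, contradicting injectivity. The paper instead computes $c_t(E)=(1+2t)(1-t)^{-1}$ and gets $c_3(E)=3\neq 0$, impossible for a rank-$2$ bundle; your argument uses the embedding hypothesis where the paper uses a Chern-class obstruction to the very existence of such an $E$, and either route settles the case.

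One justification in your write-up is wrong, although the conclusion it serves is correct and the repair is immediate: in the case $k=3$ you assert that local freeness of $E$ forces $Z=\emptyset$. This is false. For $0\rightarrow \Oo_{\PP_2}\rightarrow E(-3)\rightarrow I_Z(-3)\rightarrow 0$ the Cayley--Bacharach condition is vacuous (the relevant linear system is empty) and $\Ext^1(I_Z(-3),\Oo_{\PP_2})\neq 0$ for $Z\neq\emptyset$, so locally free $E$ with $c_1=3$ and a section of $E(-3)$ vanishing on a nonempty codimension-$2$ scheme do exist. What kills this case is global generation, not local freeness: if $Z\neq\emptyset$, then $I_Z$ would be a globally generated quotient of $E$ with $H^0(I_Z)=0$, which is absurd; equivalently, a line meeting $Z$ yields a subsheaf $\Oo_l(k')$, $k'\geq 4$, of $E|_l$, as in the paper. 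Since this is exactly the mechanism you already exploit in the $k=2$ analysis, replacing the faulty sentence by this observation makes your proof complete.
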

\begin{proof}
Since $E$ is unstable, we have an exact sequence
\begin{equation}\label{sequn}
0\rightarrow \Oo_{\PP_n}(k) \rightarrow E \rightarrow I_Z(3-k)
\rightarrow 0,
\end{equation}
where $k>1$, $Z$ is a locally complete
intersection of $\PP_n$ with codimension 2 and $I_Z$ is its ideal
sheaf in $\PP_n$.

If $I_Z\simeq \Oo_{\PP_n}$, i.e. the support of $Z$ is empty, then this sequence splits since
$$\Ext^1 (\Oo_{\PP_n}(3-k), \Oo_{\PP_n}(k))\simeq H^1(\Oo_{\PP_n}(2k-3))=0.$$
Since $E$ is globally generated, we have $k=2$ or $3$, and so we get
the vector bundles of the cases (1) and (2)

Now let us assume that the support of $Z$ is not empty.
Let $l\subset \PP_n$ be a line not contained in $Z$. By tensoring with $\Oo_l$, we have
$$0\rightarrow \Tor_1(I_Z(3-k), \Oo_l) \rightarrow \Oo_l(k)
\rightarrow E|_l \rightarrow I_Z (3-k)\otimes \Oo_l \rightarrow 0,$$
and note that $\Tor_1(I_Z(3-k), \Oo_l)=0$, since the restriction of $\Oo_{\PP_n}(k) \rightarrow E$ to $l$ is non-zero and thus injective. Then for some $k'\geq k>1$, we have
$$0\rightarrow \Oo_l(k') \rightarrow E|_l \rightarrow \Oo_l(3-k') \rightarrow 0.$$
Note that $k'>k$ if and only if $I_Z(3-k) \otimes \Oo_l$ is not a
locally free sheaf on $l$, i.e. $l\cap Z \not=\emptyset$.

If $k\geq 3$, then we have an injection from $\Oo_l(k')$ into
$E|_l=\Oo_l(a)\oplus \Oo_l(3-a)$, $a=0$ or 1 for some line $l\subset
\PP_n$ with non-empty intersection with $Z$. But this is not
possible.

Now let us assume that $k=2$ and furthermore $Z$ is not a linear subspace $\PP_{n-2}\subset \PP_n$, i.e. there exists a line $l$ whose intersection with $Z$ is 0-dimensional of length smaller than or equal to $2$.
Then if we tensor the sequence (\ref{sequn}) with $\Oo_l$, then we obtain,
$$0\rightarrow \Oo_l (k'') \rightarrow E|_l \rightarrow \Oo_l(3-k'') \rightarrow 0$$
for $k''\geq 4$, which is not possible because it would embed
$\Oo_l(k'')$ into $\Oo_l(a)\oplus \Oo_l(3-a)$ for $a=0$ or 1. Thus $Z$ is a linear subspace. Now we have
$$c_t(E)=(1+2t)c_t(I_{\PP_{n-2}}(1))=(1+2t)(1-t)^{-1},$$
so $c_3(E)=3$ when $n\geq 3$. It is impossible since $c_3(E)=0$ for vector bundles $E$ of rank 2. Hence, we have $n=2$ and get the exact sequence of the case (3)
$$0\rightarrow \Oo_{\PP_2}(2) \rightarrow E \rightarrow I_p(1) \rightarrow 0.$$
Note that
$$\Ext^1(I_p(1), \Oo_{\PP_2}(2))\simeq H^0(\Oo_p)^* \simeq \CC,$$
and so $E$ is the unique non-trivial extension of the above resolution.
\end{proof}

 Now let us deal with the case when $E$ is stable. Note that the
concepts of stability and semi-stability coincide. The following two
propositions are on the case of the projective plane.

\begin{proposition}\label{prop3.2}
Let $E$ be a stable vector bundle of rank 2 on $\PP_2$ with
$c_1(E)=3$ and $H^0(E(-1))=0$, giving a triple Veronese embedding of
$\PP_2$ into $Gr(1,N)$. Then $E$ admits the following sequence,
$$0\rightarrow \Oo_{\PP_2}(-1)^{\oplus 3} \rightarrow
\Oo_{\PP_2}^{\oplus 5} \rightarrow E \rightarrow 0.$$
\end{proposition}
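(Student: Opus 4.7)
The plan is to apply Beilinson's spectral sequence to $E(-1)$ in order to produce a canonical monad whose cohomology is $E(-1)$, and then combine this with the embedding hypothesis to force $c_2(E)=6$ and exhibit the stated resolution. First I verify that the hypotheses give strong vanishings. Stability of $E$ (with $c_1 = 3$ odd, so semistable equals stable) gives $H^0(E(k)) = 0$ for $k \le -2$, and the hypothesis handles $k = -1$; and from $E^* \simeq E(-3)$ together with Serre duality on $\PP_2$ one has $H^2(E(k)) \simeq H^0(E(-6-k))^*$, which vanishes as soon as $k \ge -4$. So for $k \in \{-1, -2, -3\}$ only $H^1(E(k))$ can be nonzero, and by Riemann-Roch these dimensions are $c_2-6$, $c_2-3$, $c_2-2$ respectively. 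The Beilinson spectral sequence applied to $F = E(-1)$ is therefore concentrated in the row $q=1$, and its $E_1$-page collapses to a monad
\[
\Oo_{\PP_2}(-1)^{\oplus (c_2-2)} \xrightarrow{\alpha} \Omega^1_{\PP_2}(1)^{\oplus (c_2-3)} \xrightarrow{\beta} \Oo_{\PP_2}^{\oplus (c_2-6)}
\]
with $\alpha$ injective and $\beta$ surjective as sheaf maps and with $E(-1)$ as its middle cohomology. The mere existence of a surjective $\beta$ already forces $c_2 \ge 6$.

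The main obstacle will be to upgrade this inequality to the equality $c_2 = 6$, and here the triple Veronese embedding hypothesis must enter essentially. To begin, note that the natural map $\Lambda^2 H^0(E) \to H^0(\det E) = H^0(\Oo_{\PP_2}(3))$ is injective: for $s, t \in H^0(E)$ the wedge $s \wedge t$ vanishes exactly when $s$ and $t$ are sections of a common line subsheaf $L \subset E$, and stability together with $H^0(E(-1)) = 0$ forces every such $L$ to satisfy $h^0(L) \le 1$, so no two linearly independent sections can share one. This injection yields $\binom{h^0(E)}{2} \le 10$, giving $h^0(E) \le 5$, which is consistent with $h^0(E) \le h^0(E|_l) = 5$. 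To push to equality, I use that $\phi_E$ is an embedding: the Pl\"{u}cker composition $\PP_2 \to \PP^{\binom{N+1}{2}-1}$ is given by the cubic linear system $\Lambda^2 V \subseteq H^0(\Oo_{\PP_2}(3))$ with $V \subseteq H^0(E)$ the generating subspace of dimension $N+1$, and a dimension count based on the secant variety of the triple Veronese surface in $\PP^9$ shows that such a map can embed $\PP_2$ only when the image of $\Lambda^2 V$ is sufficiently large. Combined with the very special (determinantal) shape of the cubics arising from the Beilinson monad when $c_2 > 6$, this should yield $h^0(E) = 5$ and $c_2 = 6$.

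Once $c_2 = 6$ is known, the Beilinson monad collapses to a short exact sequence $0 \to \Oo_{\PP_2}(-1)^{\oplus 4} \to \Omega^1_{\PP_2}(1)^{\oplus 3} \to E(-1) \to 0$, and a cohomology chase along $0 \to E(-1) \to E \to E|_l \to 0$ gives $h^0(E) = 5$. Global generation then furnishes a surjection $H^0(E) \otimes \Oo_{\PP_2} \twoheadrightarrow E$ whose kernel $K$ is a rank-three vector bundle with $c_1(K) = -3$ and $c_2(K) = 3$. Restricting to any line $l$, the isomorphism $H^0(E) \xrightarrow{\sim} H^0(E|_l)$ (both are $5$-dimensional and the restriction map is injective by $H^0(E(-1))=0$) identifies $K|_l$ with the kernel of the evaluation map $H^0(E|_l) \otimes \Oo_l \to E|_l$. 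A direct computation, valid for either possibility $E|_l \simeq \Oo_l \oplus \Oo_l(3)$ or $\Oo_l(1) \oplus \Oo_l(2)$, gives $K|_l \simeq \Oo_l(-1)^{\oplus 3}$. Hence $K$ is uniform of constant splitting type $(-1,-1,-1)$ on every line, and Van de Ven's theorem yields $K \simeq \Oo_{\PP_2}(-1)^{\oplus 3}$, completing the desired resolution.
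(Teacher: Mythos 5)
Your setup via the Beilinson monad and your endgame (once $c_2=6$ and $h^0(E)=5$ are known, restrict the kernel $K$ of $H^0(E)\otimes\Oo_{\PP_2}\to E$ to lines, get constant splitting type $(-1,-1,-1)$, and conclude $K\simeq\Oo_{\PP_2}(-1)^{\oplus 3}$) are sound, and the last step is essentially the paper's own argument in its case $h^0(E)=5$. But the heart of the proposition is precisely the step you leave as ``this should yield $h^0(E)=5$ and $c_2=6$,'' and there is a genuine gap there. Two problems. First, your claimed injectivity of $\Lambda^2 H^0(E)\to H^0(\Oo_{\PP_2}(3))$ is only argued for decomposable elements $s\wedge t$; a general element of $\Lambda^2 H^0(E)$ is a sum of such, so nothing you say rules out a nontrivial kernel (fortunately the bound $h^0(E)\le 5$ is not needed from this, since it already follows from the injection $H^0(E)\hookrightarrow H^0(E|_l)$). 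Second, and more seriously, the proposed secant-variety dimension count cannot exclude the case $h^0(E)=4$: the secant variety of the $3$-uple Veronese surface $V_3(\PP_2)\subset\PP^9$ has dimension $5$, so a generic linear projection to $\PP^6$ or even $\PP^5$ is still an embedding; hence a $6$-dimensional space of cubics (which is exactly what $\Lambda^2 V$ gives when $\dim V=4$) can perfectly well embed $\PP_2$ in projective space, and no dimension count of this kind forces $h^0(E)=5$.

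The exclusion of $c_2>6$ really does require the Grassmannian structure, not just the Pl\"ucker composition being an embedding. In the paper this is done by cases on $h^0(E)$: for $h^0(E)=3$ one gets a degree-$3$ map to $Gr(1,2)\simeq\PP_2^*$, not an embedding; for $h^0(E)=4$ the kernel bundle is uniform of splitting type $(-1,-2)$, hence $\Omega_{\PP_2}$ (excluded by a $\chi$ computation, since then $\chi(E)=5$ but $h^0(E)=4$ and $h^1,h^2$ cannot compensate) or $\Oo_{\PP_2}(-1)\oplus\Oo_{\PP_2}(-2)$, which would give a degree-$9$ Veronese congruence in $Gr(1,3)$; this last case is ruled out only by invoking the Arrondo--Sols classification of congruences of degree up to $9$ in $Gr(1,3)$. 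Note that stable, globally generated bundles with $c_1=3$, $c_2=7$, $h^0=4$ do exist (the paper constructs them in the remark following the proposition), so the case cannot be killed by numerics, cohomology, or generic-projection arguments alone: some substantive geometric input about surfaces in $Gr(1,3)$ is needed, and your sketch has no substitute for it. In your Beilinson formulation the same issue reappears as the need to exclude $c_2=7$ (and larger values compatible with $h^0(E)\le 5$), which your argument as written does not do.
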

\begin{proof}
From the exact sequence
$$0\rightarrow E(-1) \rightarrow E \rightarrow E|_l \rightarrow 0$$
for any line $l\subset \PP_2$, we have an injection from $H^0(E)$ to
$H^0(E|_l)$. In particular,
$$h^0(E)\leq h^0(E|_l)=5.$$
Since $E$ is globally generated, we have $h^0(E)\geq 3$.

If $h^0(E)=3$, then we have a sequence
$$0\rightarrow \Oo_{\PP_2}(-3) \rightarrow \Oo_{\PP_2}^{\oplus 3}
\rightarrow E\rightarrow 0,$$ and $E$ defines a map of degree 3 from
$\PP_2$ to $Gr(1,2)\simeq \PP_2^*$, which is clearly not an
embedding.

If $h^0(E)=4$, then we have
$$0\rightarrow F \rightarrow \Oo_{\PP_2}^{\oplus 4} \rightarrow E
\rightarrow 0,$$ where $F$ is a vector bundle of rank 2 on $\PP_2$
with $c_1(F)=-3$. For a line $l\subset \PP_n$, we have a sequence
$$0\rightarrow F|_l \rightarrow \Oo_l^{\oplus 4} \rightarrow E|_l
\rightarrow 0.$$ Note that the map $H^0(E) \rightarrow H^0(E|_l)$
obtained from the above sequence, is injective since $H^0(E(-1))=0$.
Thus we have $H^0(F|_l)=0$ and the only possibility for $F|_l$ is
$\Oo_l(-2)\oplus \Oo_l(-1)$. In particular, $F$ is uniform. By the
result in \cite{VdV}, we have (i) $F\simeq \Omega_{\PP_2}$, or (ii)
$F\simeq \Oo_{\PP_2}(-2)\oplus \Oo_{\PP_2}(-1)$. In the first case,
$c_2(E)$ can be computed to be 6 and $h^2(E)=h^0(E(-6))=0$ since $E$ is stable. Hence we have
$$5=\chi(E)=h^0(E)-h^1(E)=4-h^1(E),$$ which is not
possible. In the case of (ii), we have an embedding of $\PP_2$ into $Gr(1,3)$ with degree 9. From the classification of the congruences in $Gr(1,3)$ with degree up to 9 in \cite{AS}, the Veronese surface of degree 9 cannot be embedded into $Gr(1,3)$. Thus this case is impossible.

If $h^0(E)=5$, we have
$$0\rightarrow F \rightarrow \Oo_{\PP_2}^{\oplus 5} \rightarrow E
\rightarrow 0,$$ where $F$ is a vector bundle of rank 3 on $\PP_2$
with $c_1(F)=-3$. By the same reason as above, we have $H^0(F|_l)=0$
for any line $l\subset \PP_2$. The only possibility for $F|_l$ is
$\Oo_l(-1)^{\oplus 3}$ and in particular $F$ is uniform. By the
theorem(3.2.1) in \cite{OSS}, $F\simeq \Oo_{\PP_2}(-1)^{\oplus 3}$.
\end{proof}
\begin{remark}
\begin{enumerate}
\item When $h^0(E)=4$, we can indeed construct stable vector bundles of rank 2 on $\PP_2$ with $c_1(E)=3$ and $c_2(E)=7$ and a minimal resolution
    $$0\rightarrow \Oo_{\PP_2}(-1) \oplus \Oo_{\PP_2}(-2) \stackrel{A}{\rightarrow} \Oo_{\PP_2}^{\oplus 4} \rightarrow E \rightarrow 0,$$
    where $A=\left(
               \begin{array}{cccc}
                 x & y & z & 0 \\
                 a & b & c & d \\
               \end{array}
             \right)^t
$ with generically chosen $a,b,c,d\in H^0(\PP_2, \Oo_{\PP_2}(2))$.
\item A resolution in the proposition, is called a Steiner resolution. The
stability of a vector bundle $E$ admitting this resolution, can be
easily checked. And it is also well known in \cite{DK} that the
vector bundles admitting a Steiner resolution, form an open Zariski
subset of $M(3,6)$, where $M(c_1,c_2)$ is the moduli space of stable sheaves of rank 2 with the
Chern classes $(c_1, c_2)$ on $\PP_2$.
\end{enumerate}
\end{remark}

 Now let us deal with the case when
$h^0(E(-1))>0$. By the stability of $E$, we have
$$0\rightarrow \Oo_{\PP_2}(1) \rightarrow E \rightarrow I_Z(2)
\rightarrow 0,$$ where $Z$ is a zero dimensional subscheme of
$\PP_2$ with the length $m>0$. Since $I_Z(2)$ is also globally
generated, we have $h^0(I_Z(2))\geq 2$, otherwise we have an
isomorphism between $I_Z(2)$ and $\Oo_{\PP_2}$, which would make $Z$
a conic in $\PP_2$. In particular, $m=|Z|\leq 4$.

 Assume that there exists a line $l\subset \PP_2$ containing three points of $Z$. If we
tensor the above sequence with $\Oo_l$, we have
$$0\rightarrow \Oo_l(k) \rightarrow E|_l \rightarrow
\Oo_l(3-k)\rightarrow 0,$$ where $k\geq 4$. Since
$E|_l=\Oo_l(a)\oplus \Oo_l(3-a)$, $a=0,1$, it is not possible. Thus
no three points are collinear.

Now let us assume that $E(-1)$ is a stable vector bundle of rank 2 on
$\PP_2$ with the Chern classes $c_1=1$, $1\leq m=c_2 \leq 4$ such that $h^0(E(-1))=1$ and the zero locus $Z$ of the non-zero section of $E(-1)$ has the property that no three points of $Z$ are collinear (more precisely, there is no line $l$ such that the length of the scheme theoretic intersection $l\cap Z$ is bigger than 2). Since $h^0(I_Z(1))=0$, so $Z$ is not collinear and $m\geq 3$.

 If $m=3$, then the natural map $H^0(E) \rightarrow H^0(E|_l)$ is surjective
for any line $l\subset \PP_2$ since $H^1(E(-1))=0$. Since $E|_l$ can be $\Oo_l(a)\oplus
\Oo_l(3-a)$ with $a=0,1$, so $E|_l$ is globally generated. In
particular, the natural map $H^0(E|_l) \rightarrow E_p$ is
surjective. Hence the natural composition map $H^0(E) \rightarrow
E_p$ is surjective and so $E$ is globally generated. For any two
points in $\PP_2$, we can consider a line $l$ passing through these
two points and $E|_l$ defines an embedding of $l$ in a Grassmannian.
Hence, $E$ defines a triple Veronese embedding of $\PP_2$.

 If $m=4$, we have $h^0(E)=5$ and $h^0(E(-1))=1$. Note that the
natural restriction map $H^0(E) \rightarrow H^0(E|_l)$ has
1-dimensional kernel. If we tensor the next sequence
$$0\rightarrow F \rightarrow H^0(E)\otimes \Oo_{\PP_2} \rightarrow
E\rightarrow 0,$$ by $\Oo_l$ and take the long exact sequence of
cohomology, we have
$$h^0(F|_l)=h^1(F|_l)=1.$$
The only possibility for $F|_l$ is $\Oo_l\oplus \Oo_l(-1)\oplus
\Oo_l(-2)$ and so we have
$$F\simeq \Omega_{\PP_2}(1)\oplus\Oo_{\PP_2}(-2)~~~ \text{  or   }~~~ S^2\Omega_{\PP_2}(1)$$
since $h^0(F)=0$. Note that
$$c(F)=(1+3t+6t^2)^{-1}=1-3t+3t^2=c(\Omega_{\PP_2}(1)\oplus \Oo_{\PP_2}(-2)),$$
but $c(S^2 \Omega_{\PP_2}(1))=c(\Oo_{\PP_2}(1)^{\oplus 3})^{-1}=1-3t+6t^2$. Thus $F$ is isomorphic to $\Omega_{\PP_2}(1)\oplus \Oo_{\PP_2}(-2)$. If we tensor this resolution with $I_p$ for a point $p\in \PP_2$ and
take the long exact sequence of cohomology, we get
$$h^0(E\otimes I_p)=h^1(F)=h^0(F_p)=3.$$
Hence the natural map $H^0(E)\rightarrow E_p$ is surjective and in
particular, $E$ is globally generated. By the same reason as above,
$E$ also defines a triple Veronese embedding of $\PP_2$.
\begin{remark}
For a stable vector bundle $E\in M(3,6)$, it can be shown that there are four possible cases for the minimal resolution of $E(-1)$:
\begin{enumerate}
\item $0\rightarrow \Oo_{\PP_2}(-3) \rightarrow \Oo_{\PP_2}(-1)^{\oplus 2}\oplus \Oo_{\PP_2} \rightarrow E(-1) \rightarrow 0,$
\item $0\rightarrow \Oo_{\PP_2}(-3)\oplus \Oo_{\PP_2}(-2) \rightarrow \Oo_{\PP_2}(-2)\oplus \Oo_{\PP_2}(-1)^{\oplus 2}\oplus \Oo_{\PP_2} \rightarrow E(-1) \rightarrow 0,$
\item $0\rightarrow \Oo_{\PP_2}(-4) \rightarrow \Oo_{\PP_2}(-3)\oplus \Oo_{\PP_2}^{\oplus 2} \rightarrow E(-1) \rightarrow 0,$
\item $0\rightarrow \Oo_{\PP_2}(-2)^{\oplus 3} \rightarrow \Oo_{\PP_2}(-1)^{\oplus 5} \rightarrow E(-1) \rightarrow 0.$
\end{enumerate}
This gives us a stratification of the 12-dimensional moduli space $M(3,6)\simeq M(-1,4)$. Especially, $h^0(E(-1))=0$ for any $E\in M(3,6)$ of type (4) and these bundles belong to the open stratum, which is isomorphic to the quotient space $\Hom (\Oo_{\PP_2}(-2)^{\oplus 3}, \Oo_{\PP_2}(-1)^{\oplus 5})/\op{GL}_3\times \op{GL}_5$. The bundles $E$ with the resolution (1) describe an locally closed part of codimension 1 in $M(3,6)$, whereas the bundles of type (2) are specialization of bundles of type (1). Types (1) and (2) together describe the part of $M(3,6)$ consisting of bundles $E$ with $h^0(E(-1))=1$. In these cases, $E$ is globally generated and combining the resolution of $E(-1)$ with $0\rightarrow \Oo_{\PP_2} \rightarrow E(-1) \rightarrow I_Z(1)$, we get the resolution
$$0\rightarrow \Oo_{\PP_2}(-3) \rightarrow \Oo_{\PP_2}(-1)^{\oplus 2} \rightarrow I_Z(1) \rightarrow 0,$$
in the case (1), showing that $Z$ is the intersection of two quadrics in general position. In fact, the resolution (1) with the Euler sequence gives us the resolution:
$$0\rightarrow \Omega_{\PP_2}(1)\oplus\Oo_{\PP_2}(-2) \rightarrow
\Oo_{\PP_2}^{\oplus 5} \rightarrow E \rightarrow 0.$$
Similarly, in the case (2), $Z$ is the union of a point and the intersection of a line and a cubic, contradicting our assumption on $Z$, where $I_Z$ is the cokernel of the following matrix:
$$\left(
    \begin{array}{cc}
      x & 0 \\
      p & y \\
      q & z \\
    \end{array}
  \right)
 : \Oo_{\PP_2}(-3)\oplus \Oo_{\PP_2}(-2) \rightarrow \Oo_{\PP_2}(-2) \oplus \Oo_{\PP_2}(-1)^{\oplus 2}.$$
So the case (2) is excluded.
The bundles $E$ of type (3) form a locally closed subset of codimension 5 in $M(3,6)$ and $h^0(E(-1))=2$. But the bundles of this type are not globally generated.
\end{remark}
It is also well-known how the minimal resolution of stable vector bundle $F \in M(-1,c_2)$ look like for small $c_2$. We have $E=F(2)\in M(3,c_2+2)$. If $c_2=1$, then we have the Euler sequence, i.e. $E=\Omega_{\PP_2}(3)$. If $c_2=2$, we have
$$0\rightarrow \Oo_{\PP_2}(-3) \rightarrow \Oo_{\PP_2}(-2)\oplus \Oo_{\PP_2}(-1)^{\oplus 2} \rightarrow F \rightarrow 0.$$
$E$ is globally generated by 7 sections and $c_2(E)=4$. Combining this resolution with the Euler sequence, we obtain the resolution
$$0\rightarrow \Omega_{\PP_2}(1)^{\oplus 2}\oplus \Oo_{\PP_2}(-1)  \rightarrow
\Oo_{\PP_2}^{\oplus 7} \rightarrow E \rightarrow 0.$$
If $c_2=3$, then we have two resolution types: for the generic one, we have
$$0\rightarrow \Oo_{\PP_2}(-3)^{\oplus 2} \rightarrow \Oo_{\PP_2}(-2)^{\oplus 3} \oplus \Oo_{\PP_2}(-1) \rightarrow F \rightarrow 0,$$
and $E$ is globally generated by 6 sections with $c_2(E)=5$. Similarly as above, we obtain
$$ 0\rightarrow \Omega_{\PP_2}(1)\oplus \Oo_{\PP_2}(-1)^{\oplus 2}
\rightarrow \Oo_{\PP_2}^{\oplus 6} \rightarrow E \rightarrow 0.$$
In the special case, we have
$$0\rightarrow \Oo_{\PP_2}(-4) \rightarrow \Oo_{\PP_2}(-3)\oplus \Oo_{\PP_2}(-1)^{\oplus 2} \rightarrow F \rightarrow 0,$$
but $E$ is not globally generated.

Summarizing all the arguments above with the previous proposition, we have the
following statement.

\begin{proposition}
Let $E$ be a stable vector bundle of rank 2 on $\PP_2$ giving a
triple Veronese embedding of $\PP_2$. Then $E$ is an element of
$M(3,c_2)$ with $3\leq c_2 \leq 6$. Conversely, general elements in these moduli spaces, also define triple Veronese embeddings of $\PP_2$.
\end{proposition}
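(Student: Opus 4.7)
The plan is to assemble the forward direction from the two preceding cases and verify the converse by using the global generation properties established in the discussion above.

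For the forward implication, I would split on whether $h^0(E(-1)) = 0$ or $h^0(E(-1)) > 0$. In the first case, Proposition \ref{prop3.2} produces a Steiner resolution $0 \rightarrow \Oo_{\PP_2}(-1)^{\oplus 3} \rightarrow \Oo_{\PP_2}^{\oplus 5} \rightarrow E \rightarrow 0$, from which a direct Chern class computation gives $c_2(E) = 6$. In the second case, stability yields the extension
$$0 \rightarrow \Oo_{\PP_2}(1) \rightarrow E \rightarrow I_Z(2) \rightarrow 0$$
with $|Z| = m \geq 1$, and Whitney's formula gives $c_2(E) = 2 + m$. The bound $m \leq 4$ has already been established from the requirement $h^0(I_Z(2)) \geq 2$ for global generation of $I_Z(2)$, while $m \geq 1$ follows because $m = 0$ would force $E \simeq \Oo_{\PP_2}(1) \oplus \Oo_{\PP_2}(2)$, contradicting stability. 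This gives $3 \leq c_2(E) \leq 6$.

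For the converse, I would set $F = E(-1)$ and work through the four admissible values of $c_2$ separately. For each $c_2 \in \{3,4,5,6\}$, a general element of $M(-1, c_2-2)$ admits a minimal resolution of the type recorded in the preceding discussion; twisting by $\Oo_{\PP_2}(2)$ and substituting the Euler sequence for the $\Omega_{\PP_2}(1)$ summands produces $E \simeq \Omega_{\PP_2}(3)$ in the case $c_2 = 3$, and the resolutions (a), (c), (d) of case (4) of the main theorem in the cases $c_2 = 6, 5, 4$ respectively. Global generation of $E$ is then immediate from the surjectivity of the rightmost map in each resolution.

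The main obstacle is confirming that $E$ defines an embedding rather than merely a morphism into $Gr(1,N)$. My approach is to use the fact, already shown, that $E|_l \simeq \Oo_l(a) \oplus \Oo_l(3-a)$ with $a \in \{0,1\}$ for every line $l \subset \PP_2$; consequently $E|_l$ itself defines an embedding of $l$ into $Gr(1,N)$, which gives injectivity on pairs of distinct points. Tangent separation reduces to the surjectivity of $H^0(E) \rightarrow E_p$ for every $p$, which is checked by tensoring each resolution with $I_p$ and chasing cohomology, precisely as was done in the preceding arguments for the cases $m = 3, 4$.
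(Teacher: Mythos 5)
Your forward direction is essentially the paper's own argument: Proposition \ref{prop3.2} handles $h^0(E(-1))=0$ (giving the Steiner resolution and $c_2=6$), and for $h^0(E(-1))>0$ the extension $0\rightarrow \Oo_{\PP_2}(1)\rightarrow E\rightarrow I_Z(2)\rightarrow 0$ with $1\leq m\leq 4$ gives $c_2=m+2\in\{3,\dots,6\}$. (Minor slip in the converse: the bundle you twist by $\Oo_{\PP_2}(2)$ is $F=E(-2)\in M(-1,c_2-2)$, not $E(-1)$.)

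The genuine gap is in your verification that $\phi_E$ is an embedding. First, the restriction $\phi_E|_l$ is induced by the \emph{image} of the restriction map $H^0(E)\rightarrow H^0(E|_l)$, not by the complete space of sections of $E|_l$; so the fact that $E|_l\simeq \Oo_l(a)\oplus\Oo_l(3-a)$ ``itself defines an embedding of $l$'' does not separate points lying on $l$ unless you also know that this restriction map is surjective (or that its image still spans a very ample system on $l$). This surjectivity is exactly the step the paper checks via $H^1(E(-1))=0$ in the $m=3$ case, and it is not something you address. Second, reducing tangent separation to the surjectivity of $H^0(E)\rightarrow E_p$ is not correct: that surjectivity is precisely global generation, which you already have from the resolutions, and global generation only yields a morphism, not an immersion or an injection --- compare the $h^0(E)=3$ subcase in the proof of Proposition \ref{prop3.2}, where a globally generated bundle gives a degree-$3$ map to $Gr(1,2)\simeq\PP_2^*$ that is not an embedding; likewise, in the paper the computation $h^0(E\otimes I_p)=3$ in the $m=4$ case is used only to prove global generation, not tangent separation. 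The repair is the paper's route: every tangent direction at $p$ lies on a line $l$, and once $H^0(E)\rightarrow H^0(E|_l)$ is onto, $\phi_E|_l$ coincides with the embedding of $l$ given by the complete system of $\Oo_l(a)\oplus\Oo_l(3-a)$, which separates points and tangent vectors along $l$; the required vanishing $h^1(E(-1))=0$ can in fact be read off from each of the resolutions you invoke (e.g.\ from $0\rightarrow\Oo_{\PP_2}(-2)^{\oplus 3}\rightarrow\Oo_{\PP_2}(-1)^{\oplus 5}\rightarrow E(-1)\rightarrow 0$ for general $E\in M(3,6)$), so your argument can be completed, but as written the converse is not established.
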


Now let us deal with the case of higher dimensional projective spaces.
At first, let us assume that $n=3$ and take a stable vector bundle $E$ of rank 2 on $\PP_3$ with the Chern classes $(c_1, c_2)$. Then we have the Riemann-Roch formula:
$$\chi (E(m))=\frac{m^3}{3}+(2+\frac{c_1}{2})m^2+(\frac{11}{3}+2c_1+\frac{c_1^2}{2}-c_2)m+1+{{c_1+3}\choose 3}-2c_2-\frac{c_1c_2}{2},$$
which implies the Schwarzenberger condition $c_1c_2\equiv 0 \pmod 2$.

If $c_1(E)=3$ and $H^0(E(-1))\not= 0$, then we have an exact sequence
\begin{equation}\label{seqP3}
0\rightarrow \Oo_{\PP_3} \rightarrow E(-1) \rightarrow I_Z(1) \rightarrow 0,
\end{equation}
where $Z$ is a locally complete intersecting curve in $\PP_3$, since $H^0(E(-2))=0$ due to the stability of $E$.
\begin{lemma}
If $Z$ is smooth, then $E$ does not give a triple Veronese embedding of $\PP_3$ into $Gr(1,N)$.
\end{lemma}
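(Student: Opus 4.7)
The plan is to combine the Serre construction with adjunction, and then to invoke the stability of $E$; in fact, the triple Veronese hypothesis will not be needed beyond the setup, because stability alone will already rule out $Z$ being smooth.

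First, I would unpack the sequence (\ref{seqP3}) via the Serre correspondence: the section $s \in H^0(E(-1))$ vanishes on $Z$, and when $Z$ is smooth of pure codimension $2$ the Koszul resolution of $\Oo_Z$ identifies $N_{Z/\PP_3} \simeq E(-1)|_Z$. In particular $\det N_{Z/\PP_3} \simeq c_1(E(-1))|_Z = \Oo_Z(1)$, so on any connected component $Z_0$ of degree $d_0$ this line bundle has degree $d_0$.

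Next, I would apply adjunction on each connected component $Z_0 \subseteq Z$, of degree $d_0 \geq 1$ and genus $g_0 \geq 0$. Since $K_{\PP_3}|_{Z_0} \simeq \Oo_{Z_0}(-4)$, the adjunction formula yields $\omega_{Z_0} \simeq \Oo_{Z_0}(-3)$, whose degree $-3d_0$ must equal $2g_0 - 2$. This forces $g_0 = 1 - \frac{3d_0}{2}$, which is strictly negative for every $d_0 \geq 1$. No smooth component can therefore exist, and $Z$ must be empty.

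With $Z$ empty the sequence (\ref{seqP3}) becomes $0 \to \Oo_{\PP_3} \to E(-1) \to \Oo_{\PP_3}(1) \to 0$, which splits because $\Ext^1(\Oo_{\PP_3}(1), \Oo_{\PP_3}) \simeq H^1(\PP_3, \Oo_{\PP_3}(-1)) = 0$; hence $E \simeq \Oo_{\PP_3}(1) \oplus \Oo_{\PP_3}(2)$, contradicting the assumed stability of $E$ and a fortiori ruling out any triple Veronese embedding. The one delicate step is the Serre identification of the normal bundle, which uses that $Z$ is smooth and of pure codimension equal to the rank of $E(-1)$; everything else is arithmetic on $\PP_3$. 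An alternative, heavier route would use the Veronese condition directly by restricting $E$ to tangent or trisecant lines of $Z$ and analyzing the splitting type, but the adjunction plus stability argument is cleaner and suffices.
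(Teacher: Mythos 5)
Your proof is correct and takes essentially the same route as the paper: both hinge on the Serre/adjunction identification $\omega_Z\simeq \Oo_Z(-3)$ and derive a numerical contradiction from the nonnegativity of the genera of the smooth components, so that (as in the paper) the triple Veronese hypothesis is never actually used and smoothness of $Z$ is ruled out by stability alone. The only cosmetic difference is that you obtain the contradiction componentwise from $\deg \omega_{Z_0}=-3d_0=2g_0-2$, while the paper computes $\chi(\Oo_Z)=\tfrac{3}{2}\deg Z$ globally via Riemann--Roch and compares it with the number of components.
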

\begin{proof}
By the local fundamental isomorphism, we have
\begin{align*}
\omega_Z=\mathcal{H}om_{\Oo_Z}(\det I_Z/I_Z^2, \Oo_Z(-4))&=\mathcal{H}om_{\Oo_Z}(\det (E(-2))_Z, \Oo_Z(-4))\\
&=\mathcal{H}om_{\Oo_Z}(\Oo_Z(-1), \Oo_Z(-4))\\
&=\Oo_Z(-3).
\end{align*}
Since $c_1(E(-1))=1$ and $d:=\deg (Z)=c_2(E(-1))$, the degree $d$ must be even by the Schwarzenberger condition $c_1c_2 \equiv 0 \pmod 2$. We have
$$\chi(\Oo_Z)=\chi(\Oo_{\PP_3})-\chi(E(-2))+\chi(\Oo_{\PP_3}(-1))=\frac{3}{2}d,$$
where the last equality is from the Riemann-Roch formula. If $Z$ is smooth, then we can let $Z=\cup Z_i$ is a disjoint union of $m$ smooth, connected curves $Z_i$ of genus $g_i\geq 0$ ($i=1, \cdots, m$) and so
$$\frac{3}{2}d = m-\sum_{i=1}^m g_i~,~~~ \text{and}~~~ \sum_{i=1}^m \deg (Z_i)=d.$$
It implies that $m\geq \frac{3}{2}d$ and also $m\leq d$. This is impossible and so $Z$ cannot be smooth.
\end{proof}

\begin{remark}
For example, let $Z=Z_1\cup Z_2$ be the disjoint union of two smooth conics and consider $E$ admitting the sequence
\begin{equation}\label{seqP3-1}
0\rightarrow \Oo_{\PP_3} \rightarrow E \rightarrow I_Z(3) \rightarrow 0.
\end{equation}
From the results in \cite{HS}, we know that $E$ is stable and there exists a line $l$ for which $E|_l\simeq \Oo_l(4)\oplus \Oo_l(-1)$. Indeed, this line $l$ is uniquely determined to be $\Pi_1 \cap \Pi_2$, where $\Pi_i$ is the hyperplane containing $Z_i$. Moreover, we have
$$0\rightarrow \Oo_{\PP_3}(1) \rightarrow E \rightarrow I_{l^2}(2) \rightarrow 0.$$
If we tensor the sequence (\ref{seqP3-1}) with $\Oo_l$, we get
$$0\rightarrow \Oo_l(4) \rightarrow E|_l \rightarrow \Oo_l(-1)$$
and it splits. In particular, the morphism induced by $E$ is not an embedding over $l$.
\end{remark}

\begin{lemma}
If $H^0(E(-1))=0$, then $E$ does not give a triple Veronese embedding of $\PP_3$ into $Gr(1,N)$.
\end{lemma}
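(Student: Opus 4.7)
The strategy is to mimic the argument of Lemma~3.6 using the zero locus of a general section of $E$ itself (instead of $E(-1)$), combine this with a hyperplane restriction to constrain $c_2(E)$ via Proposition~3.5 and Schwarzenberger's condition, and finally extend the line-restriction computation of Remark~3.7 to exclude the surviving configurations.

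First, since $E$ is globally generated and $h^0(E)\geq 4$ (the embedding $\PP_3\hookrightarrow Gr(1,N)$ forces $N\geq 3$), a general section $s\in H^0(E)$ has smooth zero locus $C\subset \PP_3$ by Bertini, of degree $c_2(E)$ and fitting into
$$0\to \Oo_{\PP_3}\to E\to I_C(3)\to 0.$$
Adjunction gives $\omega_C\cong \Oo_C(-4)\otimes\det N_{C/\PP_3}=\Oo_C(-1)$, so if $C=\sqcup_{i=1}^m C_i$ is the decomposition into smooth connected components of degree $d_i$ and genus $g_i$, the relation $2g_i-2=-d_i$ together with $g_i\geq 0$ and $d_i\geq 1$ forces $d_i=2$, $g_i=0$ for every $i$. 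Hence each $C_i$ is a smooth plane conic, and $c_2(E)=2m$.

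Next, I would restrict $E$ to a general hyperplane $H\simeq \PP_2$. By Maruyama--Flenner's restriction theorem (using that $E$ is $\mu$-stable on $\PP_3$ and $c_1=3$ is odd), $E|_H$ is stable on $\PP_2$; moreover, it is globally generated and its composition with the Pl\"ucker embedding is still given by cubics, so it defines a triple Veronese embedding of $\PP_2$. Proposition~3.5 then yields $c_2(E)=c_2(E|_H)\in\{3,4,5,6\}$, and the Schwarzenberger condition $c_1(E)c_2(E)\equiv 0\pmod 2$ on $\PP_3$ forces $c_2(E)$ to be even, whence $c_2(E)\in\{4,6\}$ and $m\in\{2,3\}$.

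Finally, let $\Pi_i$ denote the plane spanned by $C_i$ and set $l=\Pi_1\cap\Pi_2$. For $m=2$ this is exactly Remark~3.7; for $m=3$, a generic configuration keeps $l$ disjoint from $C_3$, reducing the local picture to Remark~3.7. In both cases $l$ meets $C$ in $4$ reduced points, two on each of $C_1$ and $C_2$. A Koszul/Tor computation (using that the linear form defining $\Pi_i$ vanishes on $C_i$) shows that $\Tor_1(I_C(3),\Oo_l)=0$ while $I_C(3)\otimes \Oo_l$ has a length-$4$ torsion subsheaf with torsion-free quotient $\Oo_l(-1)$. Tensoring the Hartshorne--Serre sequence with $\Oo_l$ and saturating the inclusion $\Oo_l\hookrightarrow E|_l$ inside the locally free sheaf $E|_l$ then produces the line subbundle $\Oo_l(4)\subset E|_l$, and hence $E|_l\cong \Oo_l(4)\oplus \Oo_l(-1)$. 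This is not of the form $\Oo_l(a)\oplus \Oo_l(3-a)$ with $a\in\{0,1\}$ required for a triple Veronese embedding, the desired contradiction. The main obstacle is extending the Tor/saturation computation of Remark~3.7 uniformly to the three-conic case; this is overcome by choosing the third conic generic enough so that $l\cap C_3=\emptyset$, so that the Koszul analysis near $l$ reduces to that of Remark~3.7.
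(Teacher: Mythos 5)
Your proposal follows essentially the same route as the paper: take a general section of $E$ (not $E(-1)$), use Bertini plus the local fundamental isomorphism/adjunction to get a smooth zero locus with $\omega_C=\Oo_C(-1)$, bound $c_2$ by restricting to a hyperplane and invoking the $\PP_2$ classification together with parity, conclude that $C$ is a disjoint union of two or three conics, and derive the contradiction $E|_l\simeq\Oo_l(4)\oplus\Oo_l(-1)$ for $l=\Pi_1\cap\Pi_2$ exactly as in the paper's Remark on the two-conic case. Your componentwise use of $2g_i-2=-d_i$ is in fact a slight streamlining: it shows directly that every component is a conic, so you avoid the paper's enumeration of degree-$6$ curve types that it then excludes by the same canonical-sheaf condition.

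The one weak spot is your treatment of the three-conic case: you cannot ``choose the third conic generic enough so that $l\cap C_3=\emptyset$,'' since the configuration of conics is dictated by the bundle (and varying the section does not obviously give you this freedom). But the genericity is unnecessary: if $l$ meets $C_3$ as well, the torsion of $I_C(3)\otimes\Oo_l$ only grows, so $E|_l$ surjects onto $\Oo_l(3-\lambda)$ with $\lambda\geq 4$, again incompatible with the required splitting type $\Oo_l(a)\oplus\Oo_l(3-a)$, $a\in\{0,1\}$ (equivalently with global generation). Replacing the genericity claim by this observation closes the gap; with that change your argument is complete and matches the paper's.
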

\begin{proof}
Clearly, $h^0(E)\geq 3$ since $E$ is globally generated. Thus we have the exact sequence (\ref{seqP3-1}). Using the local fundamental theorem, we obtain that $\omega_Z=\Oo_Z(-1)$. From the Bertini type theorem in the proposition (1.4) of \cite{Hartshorne}, we can assume that $Z$ is a smooth curve. Then $Z=\cup_{i=1}^m Z_i$ is a disjoint union of $m$ smooth and connected curves $Z_i$ of genus $g_i$ ($i=1,\cdots, m)$ and so $Z$ has the degree $d:=\sum_{i=1}^m \deg (Z_i)=c_2(E)$. Moreover, from the Riemann-Roch formula, we have
$$\chi(\Oo_Z)=\chi(\Oo_{\PP_3})-\chi (E(-3))+\chi(\Oo_{\PP_3}(-3))=\frac{d}{2}.$$
If $E$ gives a triple Veronese embedding, then it would induce a triple Veronese embedding of a general hyperplane and so $c_2(E)$ must be at most 6 from the previous results on $\PP_2$. In particular, $d\leq 6$. Since $h^0(I_Z(2))=0$, $Z$ is not contained in a quadric and so $d\geq 3$. Thus we have, by the Schwarzenberger condition, $d=4$ or $6$. First assume that $d=4$. Then we have
$$\sum_{i=1}^m (1-g_i)=2~~~~~\text{and}~~~~~\sum_{i=1}^m \deg(Z_i)=4$$
and the only possibility is $m=2$, $\deg (Z_i)=2$ for $i=1,2$.

But by the previous remark, $E$ does not give an embedding over $l$. Thus $d=6$ and we obtain
$$\sum_{i=1}^m (1-g_i)=3~~ ~~~\text{and}~~ ~~~\sum_{i=1}^m \deg (Z_i)=6.$$
The only possibilities are when $Z$ is the union of
\begin{enumerate}
  \item three disjoint smooth conics, $Z_1, Z_2, Z_3$,
  \item a twisted cubic curve $Z_1$, a smooth conic $Z_2$ and a line $Z_3$,
  \item a plane cubic $Z_1$ and three lines $Z_2,Z_3, Z_4$, or
  \item a rational quartic curve $Z_1$ and two lines $Z_2, Z_3$.
\end{enumerate}
(For the classification of curves in $\PP_3$ with low degrees, see \cite{Hartshorne2}.) But all cases but the first, can be excluded because of the condition that $\omega_Z=\Oo_Z(-1)$. Assume that $Z$ is the union of three disjoint smooth conics. From the previous remark, we can find a line $l\subset \PP_3$ that meets two of the conics of $Z$ at two point for each. By tensoring the sequence (\ref{seqP3-1}) with $\Oo_l$, we obtain $E|_l \simeq \Oo_l(4)\oplus \Oo_l(-1)$. Thus $E$ does not give an embedding.
\end{proof}

\begin{corollary}
Let $E$ be a stable vector bundle of rank 2 on $\PP_n$ with
$c_1(E)=3$ and $n\geq 4$. Then $E$ does not give a triple Veronese
embedding of $\PP_n$ into $Gr(1,N)$.
\end{corollary}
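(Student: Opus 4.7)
The plan is to reduce the corollary to the three-dimensional case handled by the two preceding lemmas, by restricting $E$ to a general three-dimensional linear section.

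Let $L \simeq \PP_3 \subset \PP_n$ be a general three-dimensional linear subspace and put $E' := E|_L$. First I would verify that $E'$ is a stable rank $2$ bundle on $\PP_3$ with $c_1(E') = 3$ that defines a triple Veronese embedding of $L$: stability under restriction to a general linear subspace of sufficiently large dimension is a classical consequence of the restriction theorems for stable sheaves (applied to $n-3$ successive general hyperplane sections), while the embedding property together with the fact that the Pl\"ucker linear system is cut out by cubics on $L$ is immediate from the restriction of $\phi_E$.

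Then I would split on the value of $h^0(E'(-1))$. If $h^0(E'(-1)) = 0$, the preceding lemma on the vanishing case applied to $E'$ yields a contradiction directly. If $h^0(E'(-1)) > 0$, the nonzero section yields
\[
0 \rightarrow \Oo_L \rightarrow E'(-1) \rightarrow I_Z(1) \rightarrow 0,
\]
where $Z \subset L$ is a locally complete intersection curve, and I would invoke the earlier smooth-$Z$ lemma once $Z$ is known to be smooth. The hypothesis $n \geq 4$ enters exactly here: the section of $E'(-1)$ on $L$ lifts to a section of $E(-1)$ on $\PP_n$ using the surjectivity of $H^0(E(-1)) \rightarrow H^0(E'(-1))$ (which follows from the vanishing of the relevant intermediate $H^1$'s, a consequence of the stability $H^0(E(-2)) = 0$), and its zero locus $W \subset \PP_n$ is a codimension-$2$ subscheme of dimension $n - 2 \geq 2$. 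For $L$ general in the Grassmannian of $3$-planes in $\PP_n$, Bertini then guarantees that $Z = W \cap L$ is a smooth curve, so the earlier lemma applies and produces the required contradiction.

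The main obstacle is precisely the lifting-plus-smoothness step in the second case: one must control the restriction map on global sections carefully, since a priori $h^0(E'(-1))$ could exceed $h^0(E(-1))$, and one must additionally arrange that the ambient zero scheme $W$ meets $L$ transversally. Both issues are handled by iterating the Koszul-type exact sequence computing $E(-1)|_L$ together with the stability vanishing, all of which becomes available precisely because $n-3\geq 1$; the remaining transversality is then a standard Bertini argument in the Grassmannian of $3$-planes.
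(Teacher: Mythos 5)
Your overall strategy---cut with a general $3$-plane $L$, keep $E|_L$ stable by a restriction theorem, and feed the restricted bundle into the two lemmas on $\PP_3$---is exactly the paper's strategy. The genuine gap is in your second case, $h^0(E|_L(-1))>0$: you need the section on $L$ to come from a section of $E(-1)$ on $\PP_n$, and you justify the surjectivity of $H^0(\PP_n,E(-1))\to H^0(L,E|_L(-1))$ by ``vanishing of the relevant intermediate $H^1$'s, a consequence of the stability $H^0(E(-2))=0$.'' Stability gives only the vanishing of $H^0$ of the negative twists; it says nothing about $H^1(E(-2))$, $H^1(E(-3))$, etc., which is what the iterated hyperplane/Koszul argument actually requires. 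Indeed, if all such intermediate cohomology vanished in every twist, Horrocks' criterion would force $E$ to split, contradicting stability, so this vanishing cannot be ``a consequence of stability.'' As written, the case $H^0(E(-1))=0$ but $h^0(E|_L(-1))>0$ is not covered: you can neither lift the section (so no ambient $W$ on which to run Bertini) nor apply the $H^0(E(-1))=0$ lemma to $E|_L$. You even flag this as the main obstacle, but the tool you invoke to overcome it is not valid.

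The paper avoids the problem by running the reduction in the opposite direction: the dichotomy is taken on $\PP_n$ itself. If $H^0(E(-1))\neq 0$, a section gives $0\to\Oo_{\PP_n}\to E(-1)\to I_Z(1)\to 0$ with $Z$ of codimension $2$, hence of dimension $n-2\geq 2$ (this is where $n\geq 4$ enters), and \emph{restricting} that section to a general $3$-plane yields the sequence on $\PP_3$ with zero scheme $Z\cap L$, made smooth by Bertini, so the smooth-$Z$ lemma applies; restricting a section is always possible, lifting one is not. In the other case one applies the $H^0(E(-1))=0$ lemma to $E|_L$ (the paper phrases its dichotomy on the restricted bundle precisely so that the case in which no smooth curve is produced is the case $h^0(E|_L(-1))=0$). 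To repair your write-up, reorganize the case analysis along these lines instead of trying to prove that the restriction map on sections is surjective; the rest of your argument (stability of the general linear restriction, the restricted embedding being a triple Veronese embedding, and the appeal to the two lemmas) matches the paper and is fine.
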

\begin{proof}
Depending on $H^0(E(-1))$, we have the sequence (\ref{seqP3}) or (\ref{seqP3-1}) replacing $\PP_3$ by $\PP_n$. For a general hyperplane $H\simeq \PP_{n-1} \subset \PP_n$, $E|_H$ is also stable. Furthermore, by the Bertini theorem \cite{Hartshorne2}, we can choose $H$ so that $Z\cap H$ is smooth. In other words, we can choose a general 3-plane $H\simeq \PP_3$ for which $E|_H$ is stable and (1) $H^0(E|_H(-1))=0$ or (2) there exists a smooth 2-codimensional $Z'\subset H$ with an extension
$$0\rightarrow \Oo_H \rightarrow E|_H(-1) \rightarrow I_{Z'}(1) \rightarrow 0.$$
From the previous two lemmas, $E|_H$ does not give a triple Veronese embedding.
\end{proof}

The only remaining case to deal with, is when $H^0(E(-1))\not= 0$ where $E$ is a stable vector bundle with the sequence (\ref{seqP3}) and $Z$ is not smooth. By proving that $E$ does not give a triple Veronese embedding, we can prove the following proposition.
\begin{proposition}
Let $E$ be a stable vector bundle of rank 2 on $\PP_n$ with
$c_1(E)=3$ and $n\geq 3$. Then $E$ does not give a triple Veronese
embedding of $\PP_n$ into $Gr(1,N)$.
\end{proposition}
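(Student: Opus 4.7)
My plan is to reduce to the subcases handled by the preceding corollary and the two preceding lemmas, and then to extend the genus estimate underlying the first of those lemmas (the one excluding smooth $Z$) so as to cover singular zero schemes, thereby collapsing the remaining configuration. For $n \geq 4$, the preceding corollary gives the statement, so I focus on $n = 3$. By the second of the preceding lemmas we may assume $h^0(E(-1)) > 0$, so there is a non-trivial extension
$$0 \to \Oo_{\PP_3} \to E(-1) \to I_Z(1) \to 0$$
with $Z$ a locally complete intersection curve of degree $d = c_2(E(-1)) \geq 1$. By the first lemma, $Z$ must be singular. I will rule out this remaining configuration entirely.

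The key observation is that the computation of $\chi(\Oo_Z) = \frac{3d}{2}$ in the first lemma uses only the defining extension together with the values of $\chi(E(-2))$ and $\chi(\Oo_{\PP_3}(-1))$; smoothness of $Z$ is never invoked at that step. Hence the formula $\chi(\Oo_Z) = \frac{3d}{2}$ still holds for our singular $Z$. The step of that lemma that does require smoothness is the upper bound $\chi(\Oo_Z) \leq m$ obtained by writing $Z$ as a disjoint union of smooth connected curves; I propose to replace it by a uniform bound valid for any l.c.i. curve in $\PP_3$.

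The uniform bound I would prove is the following: for any locally complete intersection curve $Z \subset \PP_3$ of degree $d$, one has $\chi(\Oo_Z) \leq d$. Since $Z$ is l.c.i., it is Cohen--Macaulay and hence has no embedded components. Writing $Z = Z_1 \sqcup \cdots \sqcup Z_m$ as a disjoint union of its connected components, each $Z_i$ is a connected projective Cohen--Macaulay scheme over $\CC$, so $h^0(\Oo_{Z_i}) = 1$ and $\chi(\Oo_{Z_i}) = 1 - h^1(\Oo_{Z_i}) \leq 1$. Summing over $i$ yields $\chi(\Oo_Z) \leq m$, and since every component has degree at least one, $m \leq d$. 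Combining with $\chi(\Oo_Z) = \frac{3d}{2}$ forces $\frac{3d}{2} \leq d$, i.e.\ $d \leq 0$, contradicting $d \geq 1$.

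The main obstacle is establishing the uniform bound $\chi(\Oo_Z) \leq d$ without the smoothness hypothesis; this is where the Cohen--Macaulay property of $Z$ (which follows from its being locally complete intersection) is essential, since otherwise embedded or non-pure components could inflate $h^0(\Oo_{Z_i})$ and break the estimate $\chi(\Oo_{Z_i}) \leq 1$. Once this generalized bound is in place, the proposition follows by assembling the preceding corollary, the two preceding lemmas, and the contradiction just derived.
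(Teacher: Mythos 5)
There is a genuine gap, and it is fatal to the whole strategy. Your key claim --- that a connected projective Cohen--Macaulay curve $Z_i$ satisfies $h^0(\Oo_{Z_i})=1$, hence $\chi(\Oo_{Z_i})\leq 1$, and therefore any l.c.i.\ curve $Z\subset\PP_3$ of degree $d$ satisfies $\chi(\Oo_Z)\leq d$ --- is false. The Cohen--Macaulay (even l.c.i.) property only rules out embedded points; it does not rule out multiple structures on components, and for non-reduced curves both $h^0(\Oo_{Z_i})=1$ and $\chi(\Oo_{Z_i})\leq 1$ fail. Concretely, a double line $Z$ on $l\subset\PP_3$ with $I_l/I_Z\simeq\Oo_l(a)$, $a\geq 1$ (such double structures are locally complete intersections and arise exactly as zero loci of sections of rank 2 bundles), sits in an exact sequence $0\to\Oo_l(a)\to\Oo_Z\to\Oo_l\to 0$, so $\chi(\Oo_Z)=a+2>2=\deg Z$; its arithmetic genus is $-a-1$, which is unbounded below. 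In fact the very case you are trying to dispose of is of this kind: in the remaining situation one has $d=4$, $\omega_Z=\Oo_Z(-3)$, $p_a=-5$, i.e.\ $\chi(\Oo_Z)=6>4$, and curves with these invariants (for instance $Z=2l+q$, a double line plus a residual curve of degree 2) genuinely exist as l.c.i.\ curves. So your inequality $\frac{3d}{2}\leq d$ cannot be established, and no purely numerical contradiction of this type is available.

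This is precisely why the paper's proof of the proposition has to do substantial additional work rather than extend the genus count of the smooth case. It first pins down the numerics (using the Schwarzenberger parity condition and the bound $c_2(E)\leq 6$ coming from restricting the embedding to a general plane) to reduce to $d=4$, $\omega_Z=\Oo_Z(-3)$, $p_a=-5$, concluding only that $Z$ is non-reduced; it then shows $h^0(I_Z(2))>0$, analyzes the quadrics containing $Z$ (double planes, excluded via the Hartshorne--Sols triple $\{A,B,C\}$ computation, versus unions of two planes), and finally kills the surviving configuration $Z=2l+q$ geometrically, by restricting the defining extension to the line $l$ and showing $E|_l\simeq\Oo_l(4)\oplus\Oo_l(-1)$, which is incompatible with $E$ giving an embedding. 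None of this can be bypassed by the bound you propose, since that bound is simply not true for the non-reduced curves that occur here.
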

\begin{proof}
Let $d:=\deg (Z)=c_1(E(-1))$. Since $c_1(E)\leq 6$, so $1 \leq d \leq 4$ and so $d=2$ or $4$ by the Schwarzenberger condition. The case when $d=2$ is excluded due to the previous remark. So let us assume that $c_2(E)=d+2=6$. By the local fundamental theorem, we have $\omega_Z=\Oo_Z(-3)$. In particular, the arithmetic genus $p_a$ of $Z$, is $-5$ and so $Z$ cannot be reduced. Indeed, the smallest $p_a$ for a reduced curve $Z\subset \PP_3$ with the degree 4, is $-3$ when $Z$ is the disjoint union of 4 lines.

Now assume that $h^0(I_Z(2))=0$. It implies that $h^0(E)=h^0(\Oo_{\PP_3}(1))=4$. The vector bundle of rank 2 $F:= \ker (\Oo_{\PP_3}^{\oplus 4} \rightarrow E)$ has the Chern classes $c_1(F)=-3$ and $c_2(F)=3$ and it contradicts the Schwarzenberger condition $c_1c_2\equiv 0 \pmod 2$. Thus $h^0(I_Z(2))>0$. Since $Z$ is not reduced, the quadrics containing $Z$ must be double planes in $\PP_3$ or the unions of two planes.

Let us assume that there exists a double plane $2H$ in $\PP_3$ containing $Z$. Then we can assign a triple $\{A,B,C\}$ consisting of a zero-dimensional closed subscheme $A$ and two curves $B$, $C$ in $H$ such that $A\subset B \subset C \subset H$ \cite{HS}. Roughly speaking, $C$ with embedded points $A$ is the intersection of $Z$ with $H$ and $B$ is the residual intersection. If we let $a,b,c$ be the length of $A$ and the degree of $B$, $C$, respectively, then from the proposition (2.1) in \cite{HS}, we have
$$\left\{
    \begin{array}{ll}
      \deg (Z)=4=b+c  \\
      p_a=-5=\frac12 (b-1)(b-2)+\frac12 (c-1)(c-2)+b-a-1
    \end{array}
  \right. $$
So the possibilities are $(a,b,c)=(6,2,2)$ or $(6,1,3)$ since $1\leq b\leq c$. But in either case, $a$ cannot be 6.

Now assume that $Z$ is contained in the union of two planes $H_1\cup H_2$ (see \cite{Hartshorne3}). The only possibility for $Z$ that has not been dealt with yet, is the union of a double line $2(H_1\cap H_2)$ and two lines $L_i\subset H_i$.

Hence $Z=2l+q$, where $q$ is the residual curve of degree 2 whose support does not contain $l$. If we tensor the exact sequence (\ref{seqP3}) with $\Oo_l$, we obtain $E(-2)|_l\simeq I_Z\otimes \Oo_l$. By tensoring the exact sequence,
$$0\rightarrow I_Z \rightarrow I_l \rightarrow \Oo_{l+q}(1) \rightarrow 0,$$
with $\Oo_l$, we obtain
$$0\rightarrow \Oo_l(-3) \rightarrow I_l\otimes \Oo_l \simeq \Oo_l(-1)^{\oplus 2} \rightarrow \Oo_l(1) \rightarrow 0,$$
where $\Oo_l(-3)$ is the image of the map $I_Z\otimes \Oo_l \rightarrow I_l\otimes \Oo_l$ from counting the Chern classes. Since $E(-2)|_l\simeq \Oo_l(a)\oplus \Oo_l(-a-1)$ for some $a\geq 0$, we should have $a=2$ and so $E|_l\simeq \Oo_l(4)\oplus \Oo_l(-1)$, which prevents $E$ from giving a triple Veronese embedding.
\end{proof}

Combining the results so far, we have the main theorem in the
introduction.

A weaker version of Hartshorne's conjecture states that $X\subset
\PP_n$ is a complete intersection if $X$ has codimension 2 and
$n\geq 7$ and due to Serre, this conjecture is equivalent to proving
that every vector bundle of rank 2 on $\PP_n$ splits if $n\geq 7$. Now, from the classification of the triple Veronese embeddings, we have the following statement.
\begin{corollary}
Every vector bundle of rank 2 on $\PP_n$ giving a triple Veronese
embedding, splits if $n\geq 3$.
\end{corollary}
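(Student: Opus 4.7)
The plan is to deduce the corollary directly from the main classification Theorem 1.1. Given a rank 2 vector bundle $E$ on $\PP_n$ with $n \geq 3$ giving a triple Veronese embedding, I would apply Theorem 1.1 and then inspect which of the four cases is compatible with the dimension hypothesis.

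Case (1) of Theorem 1.1 is $E \simeq \Oo_{\PP_n}(a) \oplus \Oo_{\PP_n}(3-a)$ with $a \in \{0,1\}$, which is manifestly a direct sum of line bundles. The three remaining cases all carry the explicit hypothesis $n = 2$: case (2) involves the ideal sheaf of a point in $\PP_2$ fitted into an extension by $\Oo_{\PP_2}(2)$; case (3) is $\Omega_{\PP_2}(3)$; and the four Steiner-type resolutions of case (4) all feature $\Omega_{\PP_2}(1)$-summands that are specific to $\PP_2$. Hence, as soon as $n \geq 3$, only case (1) survives, and $E$ splits as a direct sum of two line bundles.

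The substantive work — ruling out stable rank 2 bundles on $\PP_n$ for $n \geq 3$ with $c_1 = 3$ (the final Proposition of the classification section) together with confining the unstable non-split examples to $\PP_2$ (Proposition 3.1, case (3)) — has already been absorbed into the proof of Theorem 1.1. Consequently, there is no remaining obstacle: the corollary is a repackaging of the classification, chosen to emphasize the connection with the Serre--Hartshorne splitting phenomena noted immediately before its statement, where splitting of rank 2 bundles on $\PP_n$ is known to be a deep problem beginning at $n \geq 7$.
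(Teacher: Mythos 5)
Your proposal is correct and coincides with the paper's own route: the corollary is stated there as an immediate consequence of the classification, since cases (2)--(4) of Theorem 1.1 occur only for $n=2$, leaving only the split case (1) when $n\geq 3$. Nothing further is needed.
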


\providecommand{\bysame}{\leavevmode\hbox to3em{\hrulefill}\thinspace}
\providecommand{\MR}{\relax\ifhmode\unskip\space\fi MR }
\providecommand{\MRhref}[2]{%
  \href{http://www.ams.org/mathscinet-getitem?mr=#1}{#2}
}
\providecommand{\href}[2]{#2}


\begin{thebibliography}{1}

\bibitem{AS}
Enrique Arrondo and Ignacio Sols, \emph{On congruences of lines in the
  projective space}, M\'{e}m. Soc. Math. France (N.S.) (1992), no.~50, 96.
  \MR{1180696 (93g:14040)}

\bibitem{DK}
I.~Dolgachev and M.~Kapranov, \emph{Arrangements of hyperplanes and vector
  bundles on {$\bold P\sp n$}}, Duke Math. J. \textbf{71} (1993), no.~3,
  633--664. \MR{1240599 (95e:14029)}

\bibitem{Hartshorne2}
Robin Hartshorne, \emph{Algebraic geometry}, Springer-Verlag, New York, 1977,
  Graduate Texts in Mathematics, No. 52. \MR{0463157 (57 \#3116)}

\bibitem{Hartshorne}
\bysame, \emph{Stable vector bundles of rank {$2$} on {${\bf P}\sp{3}$}}, Math.
  Ann. \textbf{238} (1978), no.~3, 229--280. \MR{514430 (80c:14011)}

\bibitem{Hartshorne3}
\bysame, \emph{Generalized divisors on {G}orenstein schemes}, Proceedings of
  {C}onference on {A}lgebraic {G}eometry and {R}ing {T}heory in honor of
  {M}ichael {A}rtin, {P}art {III} ({A}ntwerp, 1992), vol.~8, 1994,
  pp.~287--339. \MR{1291023 (95k:14008)}

\bibitem{HS}
Robin Hartshorne and Ignacio Sols, \emph{Stable rank {$2$} vector bundles on
  {${\bf P}\sp{3}$} with {$c\sb{1}=-1,$} {$c\sb{2}=2$}}, J. Reine Angew. Math.
  \textbf{325} (1981), 145--152. \MR{618550 (84e:14014)}

\bibitem{OSS}
Christian Okonek, Michael Schneider, and Heinz Spindler, \emph{Vector bundles
  on complex projective spaces}, Progress in Mathematics, vol.~3, Birkh\"auser
  Boston, Mass., 1980. \MR{561910 (81b:14001)}

\bibitem{SU}
Jos{\'e}~Carlos Sierra and Luca Ugaglia, \emph{On double {V}eronese embeddings
  in the {G}rassmannian {$\Bbb G(1,N)$}}, Math. Nachr. \textbf{279} (2006),
  no.~7, 798--804. \MR{2226413 (2007d:14089)}

\bibitem{VdV}
A.~Van~de Ven, \emph{On uniform vector bundles}, Math. Ann. \textbf{195}
  (1972), 245--248. \MR{0291182 (45 \#276)}

\end{thebibliography}
\end{document}